\newtheorem{theorem}{Theorem}[section]
\newtheorem{propo}[theorem]{Proposition}
\newtheorem{lemma}[theorem]{Lemma}
\newtheorem{coro}[theorem]{Corollary}
\newtheorem{defn}[theorem]{Definition}
\theoremstyle{definition}
\newtheorem{remark}[theorem]{Remark}
\newcounter{tenumerate}
\def\P{\mathbb{P}}
\newcommand{\floor}[1]{\lfloor{#1} \rfloor}
\renewcommand{\epsilon}{\varepsilon}
\newcommand{\remove}[1]{}
\renewcommand{\leq}{\leqslant}
\renewcommand{\geq}{\geqslant}
\def\XXint#1#2#3{{\setbox0=\hbox{$#1{#2#3}{\int}$}
\vcenter{\hbox{$#2#3$}}\kern-.5\wd0}}
\begin{document}
\title{Universal Persistence for Local Time of One-dimensional Random
Walk}

\author[J.\ Miao]{Jing Miao$^\dagger$}
\author[A.\ Dembo]{Amir Dembo$^\ddagger$}


\date{\today}


\keywords{random walk, local persistence, L\'evy distribution, domain
  of attraction}

\maketitle

\maketitle

\begin{abstract}
We prove the power law decay $p(t,x) \sim t^{-\phi(x,b)/2}$ in which $p(t,x)$ is the probability that the fraction of time up to $t$ in which a random walk $S$ of i.i.d. zero-mean increments taking finitely many values, is non-negative, exceeds $x$ throughout $s \in [1,t]$. Here $\phi(x,b)= \mathbb{P}(\text{L\'evy}(1/2,\kappa(x,b))<0)$ for $\kappa(x,b) =
  \frac{\sqrt{1-x} b - \sqrt{1+x}}{\sqrt{1-x} b + \sqrt{1+x}}$ and $b=b_S \geq 0$ measuring the asymptotic asymmetry between positive and negative excursions of the walk (with $b_s=1$ for symmetric increments).
\end{abstract}

\section{Introduction}
Much efforts have been dedicated to finding the asymptotic converging
rate of $\P (T>t)$, where $T$ is some events defined on the random
process \cite{persistence}. This kind of problems have attracted a lot of interests in
the recent literature under the denomination persistence probability. In many situation of interests, and under the condition called "persistently-skewed power" which we define later, it turns out that the
behavior is polynomial: $\P (T_x >t) = t^{-\theta + o(1)}$, where $\theta$ is called the persistence exponent. It often appears in
conjunction with Spitzer's formula. The study of such asymptotic behavior has appealed attractions in the physics literature
as well, where the parameter $\theta$ is often called the survival exponent. It also appears in reliability theory, where $T_x$ is viewed
as a certain failure time whose typical upper tails are Pareto-like \cite{failure}. Other connections of the persistent exponents to
physics include regular points of inviscid Burgers equation with self-similar initial data \cite{burgers, burgers2}, positivity of random polynomials and diffusion equations \cite{polynomials, fluctuation}, and Wetting
models with Laplacian transformations \cite{wetting}. 

\par In this paper, we solve the problem that the summation of the sign of the partial sum of a zero-mean random walk is greater than $xs$ for
a fixed $x \in [0,1)$ and any positive integer $s$ such that $s \leq t$. Let $S$ be an one-dimeansional mean-zero random walk, $S_n$ the partial sums and $S_0 = 0$. The sign of the partial sum $S_n$ is $+1$ if $S_n>0$, and $-1$ if $S_n<0$. If $S_n=0$, then $\text{sgn}(S_n)=\text{sgn}(S_{n-1})$ for $n \geq 1$. In particular, we define $\text{sgn}(S_0)=+1$. Then taking $0 <x<1$, we define the following event:
\begin{equation}
\label{eventAtilde}
\tilde{A}_{t,x} := \{\sum \limits_{i=1}^s \text{sgn}(S_i)>xs, \forall 1 \leq s \leq t\}.
\end{equation}
Our goal is to find the asymptotic probability of $\tilde{A}_{t,x}$. Note that if we want to know whether the summation of the sign of the partial sum of the random walk $S$ is greater than $xs$ for all $s \in [1,t]$, it is sufficient for us to check that at each time point $s'$ when the sign of partial sums changes from $-1$ to $+1$, the summation of the sign is greater than $xs'$. Now let's define ``crossing times'' $t_0,t_1,...$ in this way:
\begin{equation}
\label{piecewise}
 \begin{cases} 
      t_0 = 0 \\
      t_i = \min\{s>t_{i-1}: \text{sgn}(S_s)\times \text{sgn}(S_{s+1})=-1\}.
   \end{cases}
\end{equation}
In other words, each $t_i$ is a ``crossing time'' of the random walk $S$ either from the positive axis to the negative axis or from the negative axis to the positive axis. Denote $t_{2i}$ as the end point of the $i$th complete excursion, and $t_{2i-1}$ as the end point of the $i$th half excursion. Let $\tau_i = t_i - t_{i-1}$ for $i \geq 1$ be called the ``inter-half-excursion time''. Then we are also interested in the event 
\begin{equation}
\label{eventA}
A_{k,x} :=\{\sum \limits_{i=1}^s \text{sgn}(S_i)>xs, \forall 1 \leq s \leq t_{2k}\}.
\end{equation}

\par Before we proceed further, let's make clear some notations that we will use throughout this paper. Most of the random variables in this paper are from the class of zero-shift-stable random variable $\mathcal{C}(\alpha,\kappa,c)$, where $\alpha \in (0,2]$ is the stability parameter, $\kappa \in [-1,1]$ is the skewness parameter, and $c \in (0,\infty)$ is the scale parameter. Let $\mathcal{Z}_{[\kappa,c]} = \mathcal{C}(1/2,\kappa,c)$, the subset of zero-shift-stable random variables with parameters $\alpha = 1/2$. A distribution or a random variable is said to be stable if a linear combination of two independent copies of a random sample has the same distribution, up to location and scale parameters. 

\par  Godr\`eche et al\cite{alr} evaluated $\lim \limits_{k \rightarrow \infty} \mathbb{P}(A_{k,x})$ by approximating the inter-half-excursion time $\{\tau_i\}_{i=1}^\infty$ of simple random walk with i.i.d. random variables in $\mathcal{Z}_{[1,1]}$. In particular, they give an explicit expression of the persistent power exponent: 
\begin{equation}
\label{eq-levy_iid}
 \P (A_{k,x}) = \frac{\Gamma(k+1-\phi(x,1))}{\Gamma(K+1)\Gamma(1-\phi(x,1))} = k
 ^{-\phi(x,1)+o(1)},
 \end{equation}
 where $\phi(x,1)=\mathbb{P}(Z<0)$, for $Z \in \mathcal{Z}_{[\frac{\sqrt{1-x}-\sqrt{1+x}}{\sqrt{1-x}+\sqrt{1+x}}]}$. For simple random walk, $b=1$. Note that $\mathbb{P}(Z_i<0)=\mathbb{P}(Z_2<0)$ if $Z_1 \in \mathcal{Z}_{[\kappa,c]}, Z_2 \in \mathcal{Z}_{[\kappa,1]}, \forall c$. In this paper, we make two generalizations of results \eqref{eq-levy_iid}. First, we do not restrict $S$ to be simple random walk, but a larger set of random walk which gives inter-half-excursion time that has a property called ``persistently power-skewness'', which we will define in a moment. Second, we move from the number of excursions up to a certain point to the time elapsed before a certain point and deduce that as time $t \rightarrow \infty$, the persistent exponent should be $\phi(x,b)/2$. 
  \begin{defn}
  Let $\{\xi_i\}_{i=1}^\infty$ be a sequence of stationary random variables, and let $W_n = \sum \limits_{i=1}^n \xi_i, \forall n$. Then $\{\xi_i\}_{i=1}^\infty$ is called {\it persistently power-skewed} if
  \begin{equation}
  \label{eq-skew}
  \lim \limits_{n \rightarrow \infty} \bigg|\frac{\log(\mathbb{P}(W_1 \geq 0,...,W_n \geq 0))}{\log n} - \mathbb{P}(W_n<0)\bigg|=0.
  \end{equation}
  \end{defn} 
  
\begin{propo}
If $\mathbb{P}(W_n<0)$ has a limit as $n \rightarrow \infty$, and $\xi_i$ are i.i.d. random variables, then $\{\xi_i\}_{i=1}^\infty$ are persistently power-skewed.
\end{propo}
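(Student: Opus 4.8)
The plan is to reduce everything to the i.i.d.\ form of Spitzer's combinatorial identity, which is exactly the place where independence enters (the identity fails for general stationary sequences, so the broader notion of persistent power-skewness genuinely requires separate treatment — this is presumably why the proposition isolates the i.i.d.\ case). Writing $q_n = \mathbb{P}(W_1 \geq 0, \ldots, W_n \geq 0)$ and $Q(s) = \sum_{n\geq 0} q_n s^n$, I would start from the weak-survival identity
$$Q(s) = \exp\left(\sum_{n=1}^\infty \frac{s^n}{n}\,\mathbb{P}(W_n \geq 0)\right), \qquad 0 \leq s < 1,$$
valid for i.i.d.\ increments, after which everything hinges on the behavior of the exponent as $s \uparrow 1$.

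Next, set $\beta = \lim_n \mathbb{P}(W_n < 0)$ (which exists by hypothesis, so in particular $\mathbb{P}(W_n \geq 0) \to 1-\beta =: \rho$), and write $\mathbb{P}(W_n \geq 0) = \rho + \eps_n$ with $\eps_n \to 0$. Splitting the exponent gives $\sum_n \frac{s^n}{n}\mathbb{P}(W_n \geq 0) = -\rho\log(1-s) + \sum_n \frac{s^n}{n}\eps_n$, and I would show the error term is $o(-\log(1-s))$ as $s \uparrow 1$: given $\delta > 0$ choose $N$ with $|\eps_n| < \delta$ for $n > N$, bound the finite head by a constant and the tail by $\delta\sum_{n>N} s^n/n \leq \delta(-\log(1-s)) + O(1)$, then let $\delta \downarrow 0$. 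This yields $\log Q(s)/(-\log(1-s)) \to \rho$; note that only ordinary convergence of $\mathbb{P}(W_n<0)$ is used here, so no Doney-type upgrade of a Ces\`aro hypothesis is needed.

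The final step is a Tauberian transfer from $Q$ to its coefficients. Since the defining events are nested, $q_n$ is non-increasing; combined with $Q(s) = (1-s)^{-\rho + o(1)}$ this lets me invoke Karamata's Tauberian theorem for the partial sums, $\sum_{k\leq n} q_k = n^{\rho + o(1)}$, and then the monotone density theorem (legitimate because $q_n$ decreases) to pass to $q_n = n^{\rho - 1 + o(1)}$. At the logarithmic scale this reads $(\log q_n)/\log n \to \rho - 1 = -\beta$, so that $(\log q_n)/\log n + \mathbb{P}(W_n<0) \to 0$, which is the asserted persistent power-skewness (with the convention that the persistence exponent is $\theta = -\lim (\log q_n)/\log n = \lim \mathbb{P}(W_n<0)$). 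As a one-sided sanity check that avoids Karamata, the monotone bound $Q(1-1/N) \geq q_N\sum_{n\leq N}(1-1/N)^n \geq c\,N q_N$ already gives $\limsup (\log q_N)/\log N \leq \rho - 1$.

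I expect the Tauberian step to be the main obstacle, for two reasons. First, the asymptotic $Q(s) = (1-s)^{-\rho + o(1)}$ pins $q_n$ down only up to slowly varying factors, so one must argue at the logarithmic scale and verify that the $o(1)$ correction in the exponent, coming from $\sum_n \frac{s^n}{n}\eps_n$, is genuinely a slowly varying perturbation; the elementary monotone estimate is robust here precisely because we only want $(\log q_n)/\log n$. Second, the clean conclusion needs $\rho \in (0,1)$, i.e.\ $\beta \in (0,1)$: for the mean-zero walks driving the paper this is automatic from the central limit theorem (both $\mathbb{P}(W_n>0)$ and $\mathbb{P}(W_n<0)$ stay bounded away from $0$ and $1$), but in the purely abstract i.i.d.\ formulation one should either adopt the standing assumption $\beta \in (0,1)$ or dispatch the degenerate endpoints $\beta \in \{0,1\}$ separately.
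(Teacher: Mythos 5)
Your proposal is correct and follows essentially the same route as the paper's own proof: the Spitzer--Sparre Andersen identity for the generating function of $q_n$, extraction of the factor $(1-s)^{-\rho}$ with the remainder $\exp\bigl(\sum_n \frac{s^n}{n}\eps_n\bigr)$ shown to be a slowly varying perturbation by the same $\delta$-splitting argument, and a Tauberian transfer using the monotonicity of $q_n$. Your added remarks (the one-sided monotone bound and the need to treat $\rho\in\{0,1\}$ separately) are refinements the paper omits, not a different method.
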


\begin{remark}
In our context, the role of $\xi_i$ is played by $(1-x)\tau_{2i-1}-(1+x)\tau_{2i}$. 
\end{remark}

\par Now we are ready to state our theorem. 
\begin{theorem}
If $S$ is an one-dimensional random walk with increment mean 0, and if $\{(1-x)\tau_{2i-1}-(1+x)\tau_{2i}\}_{i=1}^\infty$ is persistently power-skewed, then we have
  \begin{enumerate}[(a)]
  \item 
$\P (A_{k,x}) = k^{-\phi(x,b)+o(1)}$ as $k \ra \infty$, with $0<\phi(x,b) \leq 1$, where $\phi(x,b) = \P (Z<0) = \displaystyle \frac{1}{\pi} \arccos \bigg(\frac{\bar{\psi}-x}{1-\bar{\psi}x}\bigg)$, for $Z \in \mathcal{Z}_{[\kappa(x,b),1]}, \kappa(x,b)=\displaystyle \frac{\sqrt{1-x}b-\sqrt{1+x}}{\sqrt{1-x}b+\sqrt{1+x}}$, and $b=b(S)$ is a parameter that is a function of the random walk $S$, and $\psi = \displaystyle \frac{b^2-1}{b^2+1}$. We call $b$ the relative asymmetry of the positive half excursion and the negative half excursion.
\item
$\P (\tilde{A}_{t,x}) = t^{(-\phi(x,b)/2 + o(1))}$ as $t \ra \infty$, with $\phi(x,b)$ being the same function as that in $(a)$.
  \end{enumerate}
\end{theorem}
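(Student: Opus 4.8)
The plan is to first recast the excursion-count event $A_{k,x}$ as a one-sided persistence event for a genuine random walk, and then to transfer the resulting exponent from excursion count to elapsed time. For part (a), observe that $\sum_{i=1}^s\text{sgn}(S_i)$ increases by $1$ on each step of a positive half-excursion and decreases by $1$ on each step of a negative one, while the barrier $xs$ grows only at rate $x<1$; hence the constraint $\sum_{i=1}^s\text{sgn}(S_i)>xs$, once satisfied at a crossing time $t_{2i}$, cannot break during the ensuing positive half-excursion and is tightest at the next crossing time $t_{2(i+1)}$. It therefore suffices to test the constraint at the times $t_{2i}$. Writing $\tau_{2j-1}$ for the length of the $j$th positive half-excursion and $\tau_{2j}$ for that of the $j$th negative one, evaluation at $t_{2i}$ gives
\[
  \sum_{s=1}^{t_{2i}}\text{sgn}(S_s)-x\,t_{2i}=\sum_{j=1}^i\big[(1-x)\tau_{2j-1}-(1+x)\tau_{2j}\big]=W_i ,
\]
so that with $\xi_j:=(1-x)\tau_{2j-1}-(1+x)\tau_{2j}$ and $W_i=\sum_{j\le i}\xi_j$ we obtain $A_{k,x}=\{W_1>0,\dots,W_k>0\}$, up to the harmless question of strict versus non-strict inequality, which does not affect the exponent.

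Given this reduction, the persistently-power-skewed hypothesis on $\{\xi_j\}$ yields $\P(A_{k,x})=k^{-\lim_n\P(W_n<0)+o(1)}$, so it remains to identify the limit as $\phi(x,b)$. Here I would invoke the stable limit theorem: for a mean-zero walk with finitely many increment values the half-excursion lengths lie in the domain of attraction of a one-sided stable law of index $1/2$ (their tails decay like $c\,n^{-1/2}$), the positive and negative half-excursions carrying possibly distinct tail constants $c_+,c_-$ whose ratio is encoded by $b=b(S)$. Consequently $n^{-2}W_n$ converges in distribution to $Z=(1-x)A-(1+x)B$ with $A,B$ independent one-sided stable$(1/2)$ variables, and matching the right- and left-tail constants $(1-x)^{1/2}c_+$ and $(1+x)^{1/2}c_-$ shows $Z\in\mathcal Z_{[\kappa(x,b),1]}$ with exactly $\kappa(x,b)=\frac{\sqrt{1-x}\,b-\sqrt{1+x}}{\sqrt{1-x}\,b+\sqrt{1+x}}$. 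Hence $\lim_n\P(W_n<0)=\P(Z<0)=\phi(x,b)$, and the closed form $\frac1\pi\arccos\big(\frac{\bar\psi-x}{1-\bar\psi x}\big)$ follows from the standard Zolotarev expression for the distribution function at the origin of a stable$(1/2)$ law after substituting $\psi=\frac{b^2-1}{b^2+1}$.

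For part (b), let $K(t)$ denote the number of complete excursions finished by time $t$. Two monotone inclusions drive the argument: $\tilde A_{t,x}\subseteq A_{K(t),x}$ (the running constraint up to $t$ forces it at every earlier crossing time), and conversely $A_{k,x}\cap\{t_{2k}\ge t\}\subseteq\tilde A_{t,x}$. Since $t_{2k}=\sum_{i=1}^{2k}\tau_i$ is a sum of index-$1/2$ variables it grows like $k^2$, so $K(t)\approx\sqrt t$; this quadratic relation is precisely what turns the exponent $\phi$ into $\phi/2$. For the lower bound I would take $k_+=\lceil\sqrt t\,L(t)\rceil$ with $L(t)=t^{o(1)}$: the lower tail of a one-sided stable$(1/2)$ law decays like $\exp(-c/v)$ as $v\to0$, so $\P(t_{2k_+}<t)$ is super-polynomially small, whence $\P(\tilde A_{t,x})\ge\P(A_{k_+,x})-\P(t_{2k_+}<t)=k_+^{-\phi+o(1)}=t^{-\phi/2+o(1)}$.

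For the matching upper bound set $k_-=\lfloor\sqrt t/L(t)\rfloor$ and split according to whether $K(t)\ge k_-$ or $K(t)<k_-$. On the first event $\tilde A_{t,x}\subseteq A_{k_-,x}$ contributes $k_-^{-\phi+o(1)}=t^{-\phi/2+o(1)}$. The delicate term is $\P(\tilde A_{t,x},K(t)=k)$ for $k<k_-$, where time $t$ lies inside an atypically long half-excursion. By the one-big-jump principle such an event is created by a single half-excursion of length of order $t$; when this is a positive half-excursion (the generic survivor, since a deep negative half-excursion destroys the constraint) it is the $(k+1)$st excursion and hence independent of $A_{k,x}$, giving $\P(\tilde A_{t,x},K(t)=k)\lesssim\P(A_{k,x})\,\P(\tau>ct)=k^{-\phi+o(1)}t^{-1/2+o(1)}$; summing, $\sum_{k<k_-}k^{-\phi+o(1)}t^{-1/2+o(1)}=t^{-\phi/2+o(1)}$ (the sum being $\asymp k_-^{1-\phi}$ for $\phi<1$ and $\asymp\log t$ for $\phi=1$), which closes the estimate. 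I expect this final step — rigorously accounting for the configurations in which $t$ falls within a long straddling half-excursion, in particular a long negative one still compatible with survival — to be the principal technical obstacle, since it is where the heavy-tailed fluctuations of the excursion lengths interact most intricately with the persistence constraint.
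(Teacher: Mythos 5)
Your proposal follows essentially the same route as the paper: the same reduction of $A_{k,x}$ to the one-sided persistence event for $\xi_i=(1-x)\tau_{2i-1}-(1+x)\tau_{2i}$ combined with the persistently power-skewed hypothesis, the same identification of $\lim_n\P(W_n<0)$ as the negativity probability of a skewed stable$(1/2)$ law built from the positive and negative half-excursion scale constants, and for part (b) the same $N_t\approx\sqrt t$ time change with a three-regime split, super-polynomial control of the stable$(1/2)$ lower tail, and a one-big-jump decomposition yielding the $\sum_k k^{-\phi}t^{-1/2}\asymp t^{-\phi/2}$ bound. The only difference is cosmetic: you invoke domain-of-attraction tail constants and the one-big-jump principle where the paper carries out an explicit Fourier computation for the excursion-length tails and an iterated strong-Markov argument for the no-big-jump case.
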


\begin{coro}
Simple random walk gives $\{(1-x)\tau_{2i-1}-(1+x)\tau_{2i}\}_{i=1}^\infty$ that satisfies the persistently power-skewed property, because the inter-half-excursion times are i.i.d.. Thus, the conclusion of the above theorem holds. By symmetry, $b=1$, so $\kappa = \displaystyle \frac{\sqrt{1-x}-\sqrt{1+x}}{\sqrt{1-x}+\sqrt{1+x}}$.
\end{coro}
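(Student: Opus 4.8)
The plan is to verify the two hypotheses of the Proposition for the sequence $\xi_i = (1-x)\tau_{2i-1}-(1+x)\tau_{2i}$ attached to simple random walk, which reduces the Corollary to a direct invocation of the main Theorem, and then to read off $b$ and $\kappa$ from a symmetry argument. First I would establish that the inter-half-excursion times $\{\tau_i\}$ are i.i.d. Under the sign-flip convention $\mathrm{sgn}(S_n)=\mathrm{sgn}(S_{n-1})$ when $S_n=0$, each crossing time $t_i$ of a $\pm1$ walk occurs when $S_{t_i}=0$ and the following step carries the walk to the opposite side; since the walk sits exactly at the origin at every $t_i$, the strong Markov property applied at these times shows that the successive pieces regenerate, so $\tau_i=t_i-t_{i-1}$ is a renewal sequence. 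Because the increment law $\pm1$ is symmetric, a positive constant-sign run and a negative constant-sign run have identical duration laws, so the $\tau_i$ are not merely independent but also identically distributed. Consequently the disjoint pairs $(\tau_{2i-1},\tau_{2i})$ are i.i.d. in $i$, and as $\xi_i$ is a fixed linear functional of the $i$-th pair, the $\{\xi_i\}$ are i.i.d.

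Next I would show that $\mathbb{P}(W_n<0)$ converges, where $W_n=\sum_{i=1}^n \xi_i$. The classical first-passage estimate $\mathbb{P}(\tau>t)\sim c\,t^{-1/2}$ places $\tau$ in the domain of attraction of the one-sided stable law of index $1/2$. Scaling by the coefficients gives $\mathbb{P}\big((1-x)\tau>t\big)\sim c(1-x)^{1/2}t^{-1/2}$ and $\mathbb{P}\big((1+x)\tau>t\big)\sim c(1+x)^{1/2}t^{-1/2}$, so the right- and left-tail constants of $\xi_i$ are proportional to $(1-x)^{1/2}$ and $(1+x)^{1/2}$; hence $\xi_i$ is attracted to a two-sided stable law $Z$ of index $1/2$ with skewness $\kappa=\frac{(1-x)^{1/2}-(1+x)^{1/2}}{(1-x)^{1/2}+(1+x)^{1/2}}$. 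Since $\alpha=1/2<1$ no centering is required and $W_n/n^2\Rightarrow Z$; as the limiting law is absolutely continuous with $0$ a continuity point of its distribution function, $\mathbb{P}(W_n<0)=\mathbb{P}(W_n/n^2<0)\to\mathbb{P}(Z<0)$. In particular the limit exists, so the Proposition gives that $\{\xi_i\}$ is persistently power-skewed, the hypotheses of the Theorem are satisfied, and applying it yields $\mathbb{P}(A_{k,x})=k^{-\phi(x,1)+o(1)}$ and $\mathbb{P}(\tilde{A}_{t,x})=t^{-\phi(x,1)/2+o(1)}$.

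Finally I would identify $b$. In the general skewness $\kappa(x,b)=\frac{\sqrt{1-x}\,b-\sqrt{1+x}}{\sqrt{1-x}\,b+\sqrt{1+x}}$, the parameter $b$ is exactly the ratio of the right-tail constant of the positive half-excursion time to that of the negative half-excursion time: if $\mathbb{P}(\tau_{2i-1}>t)\sim a_1 t^{-1/2}$ and $\mathbb{P}(\tau_{2i}>t)\sim a_2 t^{-1/2}$, then the tail constants of $\xi_i$ are $a_1(1-x)^{1/2}$ and $a_2(1+x)^{1/2}$, and writing $b=a_1/a_2$ recovers $\kappa(x,b)$. The symmetry argument of the first step shows $a_1=a_2$ for simple random walk, so $b=1$, and the computed skewness collapses to $\kappa=\frac{\sqrt{1-x}-\sqrt{1+x}}{\sqrt{1-x}+\sqrt{1+x}}$, as claimed.

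The main obstacle I anticipate is the bookkeeping at the crossing times: one must confirm that the convention $\mathrm{sgn}(S_n)=\mathrm{sgn}(S_{n-1})$ when $S_n=0$ makes every $t_i$ a genuine regeneration point landing at the origin, so that the strong Markov property delivers exact independence rather than merely asymptotic decoupling, and that the forced alternation between positive and negative constant-sign runs does not break the identical-distribution claim — which is precisely where the symmetry of the $\pm1$ law is indispensable and where an asymmetric walk would instead produce $a_1\neq a_2$ and hence $b\neq1$.
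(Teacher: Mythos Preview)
Your proposal is correct and follows essentially the same route as the paper: the Corollary is stated without a separate proof, the reasoning being contained in its own statement (``because the inter-half-excursion times are i.i.d.'' together with Proposition~1.2, and ``by symmetry, $b=1$''). You have simply fleshed out those two sentences---supplying the strong-Markov regeneration argument at the origin, the domain-of-attraction verification that $\mathbb{P}(W_n<0)$ converges, and the tail-constant reading of $b$---none of which the paper spells out, but all of which are the intended justifications.
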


\begin{coro}
The case when we have only increment of 1 on the positive side and finitely many negative increments also gives persistently power-skewed $\{(1-x)\tau_{2i-1}-(1+x)\tau_{2i}\}_{i=1}^\infty$. Though $\tau_i$ are not i.i.d., $(1-x)\tau_{2i-1}-(1+x)\tau_{2i}$ are i.i.d..
\end{coro}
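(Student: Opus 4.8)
The plan is to verify the two hypotheses of the preceding Proposition for the sequence $\xi_i := (1-x)\tau_{2i-1}-(1+x)\tau_{2i}$, namely that the $\xi_i$ are i.i.d.\ and that $\P(W_n<0)$ converges; the persistently-power-skewed conclusion then follows immediately, and the preceding Theorem applies to this walk. The crux, and the only place where the special increment structure enters, is the i.i.d.\ claim, which I would obtain from a regeneration argument. The key observation is that because the only positive increment value is $+1$, every upcrossing of the sign --- i.e.\ every passage from $\mathrm{sgn}=-1$ to $\mathrm{sgn}=+1$ --- must occur from the state $S=0$ to the state $S=1$: indeed $S_{n+1}>0$ while $S_n\le 0$, with a step drawn from $\{+1\}\cup\{-c_1,\dots,-c_r\}$, forces the step to equal $+1$ and hence $S_n=0$, $S_{n+1}=1$. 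In particular there is \emph{no overshoot} at upcrossings.

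With this clean regeneration in hand, I would invoke the strong Markov property at the successive upcrossing epochs, at each of which the walk occupies the fixed state $S=1$ and restarts as an independent copy with the same increment law. Consequently the cycles consisting of one positive half-excursion followed by the ensuing negative half-excursion, and hence the pairs $(\tau_{2i-1},\tau_{2i})$, are i.i.d.; note that within a single cycle the two half-excursion lengths may well be dependent and the downcrossing may overshoot deep into the negatives, but this is immaterial, since $\xi_i$ is a measurable function of the $i$th cycle alone. It is precisely the absence of overshoot at upcrossings that makes the renewal point deterministic and the cycles genuinely i.i.d.; without the $+1$ assumption the landing value would be random and the regeneration would fail. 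The only subtlety is the initial cycle, which begins at $S_0=0$ with $\mathrm{sgn}=+1$ rather than at $S=1$. Since \eqref{eq-skew} is an asymptotic statement insensitive to any single summand, the possibly different law of $\xi_1$ does not affect the conclusion.

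It remains to check that $\P(W_n<0)$ has a limit. Here I would use that for a mean-zero, finite-range walk the sign-crossing times have tails of order $n^{-1/2}$, so each half-excursion length $\tau$ lies in the domain of attraction of the one-sided stable law of index $1/2$; the difference $\xi_i=(1-x)\tau_{2i-1}-(1+x)\tau_{2i}$ then lies in the domain of attraction of a (skewed) stable law $Z$ of index $1/2$. The stable limit theorem gives $W_n/b_n \Rightarrow Z$ with $b_n$ of order $n^{2}$, and since $Z$ has no atom at the origin, $\P(W_n<0)=\P(W_n/b_n<0)\to \P(Z<0)$, which is exactly the limit required. Applying the Proposition to the i.i.d.\ sequence $\{\xi_i\}$ then yields that it is persistently power-skewed. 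The main obstacle I anticipate is making the no-overshoot regeneration fully rigorous --- carefully matching the crossing-time bookkeeping of \eqref{piecewise} to the upcrossing epochs and disposing cleanly of the initial boundary term --- together with pinning down the index-$1/2$ tail asymptotics of the crossing times that guarantee existence of the limit.
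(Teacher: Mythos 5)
Your proposal is correct and follows essentially the same route the paper intends: the corollary is justified in the text only by the assertion that the $\xi_i=(1-x)\tau_{2i-1}-(1+x)\tau_{2i}$ are i.i.d.\ (so that Proposition~1.2 applies, with the limit of $\P(W_n<0)$ supplied by the stable-$\tfrac12$ tail behaviour of the half-excursion times as in Lemma~\ref{lem-general_case}). Your no-overshoot regeneration argument at the upcrossing state $S=1$, together with the remark about the anomalous first cycle, simply makes that assertion precise, and is a welcome elaboration of what the paper leaves implicit.
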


\begin{coro}
The case when the positive increments follow a truncated geometric distribution gives persistently power-skewed $\{(1-x)\tau_{2i-1}-(1+x)\tau_{2i}\}_{i=1}^\infty$ due to the memory-less property of geometric distribution. 
\end{coro}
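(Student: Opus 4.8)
The plan is to verify the hypothesis of the Theorem in this case, namely that the sequence $\xi_i := (1-x)\tau_{2i-1}-(1+x)\tau_{2i}$ is persistently power-skewed; by the Proposition this reduces to exhibiting an i.i.d. (or, failing that, a regenerative i.i.d.\ block) structure for $\{\xi_i\}$ together with convergence of $\P(W_n<0)$. First I would set up the overshoot decomposition. Split the trajectory into complete excursions, the $i$th consisting of the $i$th positive half-excursion (of length $\tau_{2i-1}$) followed by the ensuing negative half-excursion (of length $\tau_{2i}$). Record the height $O_i$ at which the walk first re-enters the positive half-line at the start of the $i$th positive half-excursion; since the increments take finitely many values, $O_i$ lives in the finite set $\{1,\dots,M\}$, where $M$ is the largest positive increment. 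The crossing step is a positive increment $Y$ taken from some depth $-m$ with $Y>m$, so $O_i=Y-m$, and the pair $(\tau_{2i-1},\tau_{2i})$, hence $\xi_i$, is a deterministic functional of the walk run from height $O_i$ until the next positive re-entry.

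The heart of the argument is that $O_{i+1}$ is independent of the excursion that produced it. By the memoryless property of the geometric law, the crossing increment conditioned on crossing, $Y-m \mid \{Y>m\}$, has a law not depending on the crossing depth $m$, hence independent of the entire negative half-excursion that precedes the re-entry. Consequently each complete excursion restarts at a height drawn from a fixed distribution, independently of its own past, so the complete excursions are i.i.d.; this is precisely what forces $\{\xi_i\}$ to be i.i.d. Note this uses only the memorylessness of the positive increments: the negative increment law influences the distribution of the depth $m$ but not the conditional law of the re-entry overshoot, which is what regenerates the process.

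The hard part will be the truncation. Exact memorylessness of $Y-m\mid\{Y>m\}$ fails once $m$ approaches the truncation level $M$, so the law of $O_{i+1}$ acquires a genuine dependence on $m$ near the boundary, and strictly speaking the $\xi_i$ are only Markov-modulated by the finite overshoot chain $\{O_i\}$. I would resolve this by a regeneration argument: $\{O_i\}$ is an irreducible Markov chain on $\{1,\dots,M\}$, so it revisits a fixed state (say $O_i=1$) infinitely often, and the blocks of $\xi$'s between successive visits are i.i.d. Running the proof of the Proposition on these i.i.d.\ regeneration blocks in place of the individual $\xi_i$ gives the persistently-power-skewed property, since the block sums inherit the same stable scaling and the same limiting sign probability up to an error that is $o(1)$ after dividing by $\log n$. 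Finally, convergence of $\P(W_n<0)$ follows from the domain-of-attraction theory: the excursion lengths $\tau_i$ have tails of order $n^{-3/2}$, placing $\xi_i$ in the domain of attraction of a $1/2$-stable law $Z\in\mathcal{Z}_{[\kappa(x,b),1]}$, so that $\P(W_n<0)\to\P(Z<0)=\phi(x,b)$. Combining this with the (regenerative) i.i.d.\ structure and the Proposition yields the conclusion of the Theorem for the truncated geometric walk.
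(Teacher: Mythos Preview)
The paper gives no proof of this corollary beyond the phrase embedded in its statement, ``due to the memory-less property of geometric distribution.'' Your proposal is exactly the natural fleshing-out of that phrase, and in fact goes further: you correctly spot that truncation spoils exact memorylessness near the cutoff, so the overshoot chain $\{O_i\}$ is a genuine finite Markov chain rather than an i.i.d.\ sequence, and Proposition~1.2 does not literally apply. The paper simply glosses over this. Your regeneration fix---cutting into i.i.d.\ blocks at successive returns of $\{O_i\}$ to a fixed state---is the right way to repair it, and is more careful than anything the paper provides.

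One step in your sketch still needs justification. Applying the Proposition to the i.i.d.\ block sums controls persistence checked only at block endpoints, whereas the persistently-power-skewed condition concerns $W_m=\sum_{i\le m}\xi_i$ at \emph{every} $m$; a partial sum can dip below zero inside a block even when all block-endpoint sums are non-negative. To close this you should argue that within-block fluctuations are of strictly smaller order than the stable scale of $W_n$: the regeneration times of a finite-state chain have exponential tails, so the running minimum inside any single block among the first $k$ is $o(k^{2})$, negligible against the $1/2$-stable scaling of $W_k$. Hence requiring the block-endpoint partial sums to stay above a slowly growing buffer (rather than merely above $0$) costs nothing in the exponent by the same Tauberian argument, while forcing $W_m\ge 0$ at every intermediate step. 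With that addition your argument is complete and strictly more rigorous than what the paper offers.
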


\vspace{30pt}
\section{Poof of Theorem 1.1(a)}
Before proving the first part of the theorem, let's prove Proposition 1.2.
\begin{proof}
\label{proof1.2}
Assume $\lim \limits_{n \rightarrow \infty} \mathbb{P} \bigg(\sum \limits_{i=1}^n \xi_i \leq 0 \bigg)=q$. By \cite[Ch.XII.7 Theorem 4]{feller} we have:
\begin{equation*}
M(s) \equiv \sum \limits_{n=1}^{\infty}p_n s^n =e^{\sum \limits_{n=1}^{\infty}
  \frac{s^n}{n} f(n)},
\end{equation*}
where $f(n)=\P \left \{ \sum \limits_{i =1}^{n} \xi_i > 0 \right \} = 1-q+\epsilon_n$ with $\epsilon_n \ra 0$. Let $L\bigg(\frac{1}{1-s}\bigg) := \exp \{\sum \limits_{n=1}^\infty \frac{s^n}{n}
  \epsilon_n\}$. Then we have 
\begin{equation*}
M(s) = \exp \left \{\left(\sum
    \limits_{n=1}^{\infty} \frac{s^n}{n}q + \sum
    \limits_{n=1}^{\infty} \frac{s^n}{n} \epsilon_n \right) \right \}
 = \displaystyle \frac{1}{(1-s)^{(1-q)}} L \bigg(\frac{1}{1-s} \bigg).
\end{equation*}
Assuming that $L \bigg(\frac{1}{1-s}\bigg)$ is a slowly varying function, since $p_n$ is monotonically decreasing in $n$, the Tauberian
Theorem in \cite[Ch.XIII.5, Theorem 5]{feller} 
gives
\begin{equation}
p^n \sim \displaystyle \frac{1}{\Gamma(1-q)} n^{-q}L(n).
\end{equation}
Then we will be done with the proof.

\par Now it only remains to show that $L\bigg(\frac{1}{1-s}\bigg)$ is indeed a slowly varying function. If $1/{1-s} = m$, then $s = 1-1/{m}$, which gives us  
\begin{equation*}
L(m) = \exp \left \{\sum \limits_{n=1}^{\infty} \frac{(1-1/m)^n}{n} \cdot
  \epsilon_n \right \}, \text{     } L(km) = \exp \left \{\sum \limits_{n=1}^{\infty}
  \frac{(1-1/(km))^n}{n} \cdot \epsilon_n \right \}. 
\end{equation*}
Since $\epsilon_n \rightarrow 0$, $\forall \delta>0$, $\exists N_\delta$, such that 
\begin{equation}
\label{eq-slow1}
1 \geq \exp \left \{\sum \limits_{n=N_\delta}^{\infty}\bigg[\frac{(1-1/m)^n}{n} -
  \frac{(1-1/(km))^n}{n}\bigg] \cdot \epsilon_n \right \} \geq k^{-\delta}, 
\end{equation}
for $k>0$. For any fixed $N_\delta$, as $m \rightarrow \infty$, we have
\begin{equation}\label{eq-sad}
\exp \left \{\sum \limits_{n=1}^{N_\delta} \bigg[\frac{(1-1/m)^n}{n} -
  \frac{(1-1/(km))^n}{n}\bigg] \cdot \epsilon_n \right \} \rightarrow 1. 
\end{equation}
\eqref{eq-slow1} and \eqref{eq-sad} are true for any $\delta>0$. Also $k^{-\delta} \rightarrow 1$ as $\delta \rightarrow 0$. Hence, we have
$\displaystyle \frac{L(km)}{L(m)} \rightarrow 1$ as $m \rightarrow
\infty$. Thus, we  proved that $L \bigg(\frac{1}{1-s} \bigg)$ is slowly varying, which finishes our proof. 
\end{proof}

\par The next lemma show that for any mean zero one-dimensional random walk with
increments taking finitely many values, the inter-half-excursion time $\tau_i$ belongs to $\mathcal{Z}_{[\kappa,c]}$. 
\begin{lemma}
  \label{lem-general_case}
Define $\tau_{-y,l} = \inf\{t: S_t =l, S_{t'} \leq 0, 1 \leq t'<t
|S_0 = -y\}$ for some $y>0, l>0$ and $\max \{y,l\} < \sup X_1$, where
$X_1 = S_1 - S_0$. Then $\tau_{-y,l} \in \mathcal{Z}_{[1,c(y,l)]}$, where the scaling $c(y,l)$ is a function of $y,l$. Since the increments of the random walk can only take on finitely many values, the inter-half-excursion time $\tau_i \in Z_{[1,c(S)]}$, where the scaling $c$ is a function of $S$. 
\end{lemma}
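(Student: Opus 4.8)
The plan is to first reinterpret the statement. Since the laws in $\mathcal{Z}_{[1,c]} = \mathcal{C}(1/2,1,c)$ are continuous one-sided stable distributions, while $\tau_{-y,l}$ is integer-valued and may equal $+\infty$, the assertion $\tau_{-y,l} \in \mathcal{Z}_{[1,c(y,l)]}$ should be read as: conditionally on $\{\tau_{-y,l} < \infty\}$, the law of $\tau_{-y,l}$ lies in the domain of attraction of the totally right-skewed ($\kappa = 1$) stable law of index $\alpha = 1/2$ with scale $c(y,l)$. By Feller's characterisation of domains of attraction via regular variation, it therefore suffices to prove that $\mathbb{P}(t \le \tau_{-y,l} < \infty)$ is regularly varying of index $-1/2$, i.e.\ that $\mathbb{P}(\tau_{-y,l} = t) \sim C(y,l)\, t^{-3/2}$ as $t \to \infty$ along the appropriate lattice. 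First I would peel off the final, successful crossing step: on $\{\tau_{-y,l} = t\}$ the walk stays in $(-\infty,0]$ through times $1,\dots,t-1$ and then jumps to the exact level $l$, so writing $S_{t-1}=j$ and $K = \sup X_1$ we obtain
\begin{equation*}
\mathbb{P}(\tau_{-y,l} = t) = \sum_{j \le 0,\; l-j \in \mathrm{supp}\,X} \mathbb{P}\big(S_1,\dots,S_{t-1} \le 0,\; S_{t-1} = j \mid S_0 = -y\big)\,\mathbb{P}(X = l-j),
\end{equation*}
a finite sum, because the increments take finitely many values force $l - K \le j \le 0$.

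The analytic heart is then the local asymptotics of the confined-walk factor $\mathbb{P}(S_1,\dots,S_{t-1}\le 0,\,S_{t-1}=j\mid S_0=-y)$, i.e.\ the probability that a mean-zero, finite-variance walk started at $-y$ stays nonpositive for $t-1$ steps and lands at a fixed $O(1)$ height $j$. I would obtain this from the local limit theorem for random walks conditioned to stay negative: Sparre--Andersen theory gives the staying-nonpositive probability of order $n^{-1/2}$, and pinning the endpoint to a bounded level contributes an extra factor of order $n^{-1}$ through the local central limit theorem, so the product behaves like $V(-y)\,V^{*}(j)\,C\,t^{-3/2}$, with $V$ and $V^{*}$ the renewal (harmonic) functions attached to the descending and ascending ladder structure. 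Summing over the finitely many admissible $j$ yields $\mathbb{P}(\tau_{-y,l}=t)\sim C(y,l)\,t^{-3/2}$, and hence $\mathbb{P}(t \le \tau_{-y,l} < \infty) \sim 2\,C(y,l)\,t^{-1/2}$, which is exactly the index $-1/2$ regular variation required; matching this tail against the Lévy$(1/2)$ tail fixes the scale $c(y,l)$.

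For the passage from $\tau_{-y,l}$ to the genuine inter-half-excursion time $\tau_i$, I would argue that, because the increments take finitely many values, only finitely many entrance heights $-y$ and landing levels $l$ with $\max\{y,l\}<K$ can occur. Thus the law of $\tau_i$ is a finite mixture of the conditioned laws of the $\tau_{-y,l}$, weighted by the entrance law of the walk at the crossing. A finite mixture of laws with tails $\sim c_m t^{-1/2}$ again has tail $\sim \big(\sum_m p_m c_m\big)\,t^{-1/2}$, so $\tau_i$ inherits the same index $-1/2$ regular variation and therefore lies in $\mathcal{Z}_{[1,c]}$ with scale $c=c(S)$ determined by $S$ (and by the sign of the half-excursion).

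I expect the main obstacle to be the confined local limit theorem with a pinned endpoint. Sparre--Andersen delivers the $n^{-1/2}$ survival probability cleanly, but upgrading to the joint event $\{\text{stay} \le 0,\ S_{t-1}=j\}$ with the sharp $t^{-3/2}$ rate and a \emph{strictly positive} constant requires the local limit theorem for the killed walk together with positivity and regularity of the ladder renewal functions $V,V^{*}$ at finite arguments. Lattice periodicity (such as the parity constraint for simple random walk) must also be tracked, restricting $t$ to the relevant residue class and absorbing the period into the constant $C(y,l)$.
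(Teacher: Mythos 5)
Your proposal is correct in substance but reaches the key estimate by a genuinely different route. The paper works on the generating-function side: it forms $\rho(s,\zeta)=\mathbb{E}[s^N e^{i\zeta S_N}]$, invokes the Wiener--Hopf-type identity $\log\frac{1}{1-\rho(s,\zeta)}=\sum_n \frac{s^n}{n}\int_y^\infty e^{i\zeta z}F^{n*}\,dz$ from Feller Ch.~XVIII.3, extracts the coefficient of $s^n$ by Fourier inversion on the lattice, and approximates $F^{n*}$ by a Gaussian density via the CLT to land on $P_{-y}\{N=n,S_N=l\}\sim C_3(y,l)\,n^{-3/2}$. You instead peel off the last jump (a finite sum over landing heights $j$ with $l-j\in\mathrm{supp}\,X$, legitimate because the increments take finitely many values) and appeal to the local limit theorem for a mean-zero, finite-variance walk conditioned to stay nonpositive, getting the same $t^{-3/2}$ rate with constant $V(-y)V^{*}(j)C$ in terms of the ladder renewal functions. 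Both arguments then finish identically: regular variation of index $-1/2$ identifies the (conditional) law as lying in $\mathcal{Z}_{[1,c(y,l)]}$, and $\tau_i$ is a finite mixture over the finitely many admissible entrance/exit heights. Your route buys rigor where the paper is weakest: the paper's interchange of the CLT approximation with the Fourier inversion integral and the subsequent manipulations (e.g.\ the step producing the factor $e^{-l^2/2}$) are not justified as written, whereas the conditioned local limit theorem is a standard, fully proved result that also delivers strict positivity of the constant. You are also more careful on two points the paper glosses over: the distribution of $\tau_{-y,l}$ is defective (the walk may first cross $0$ at a level other than $l$), so the statement must be read conditionally on $\{\tau_{-y,l}<\infty\}$; and lattice periodicity must be tracked in the local asymptotics. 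The paper's approach, in exchange, is more self-contained relative to Feller and avoids importing the harmonic-function machinery. One small caution on your mixture step: for the downstream use in Theorem~1.1 the weights should be the stationary entrance law of the finite Markov chain of crossing heights, not merely the entrance law at a fixed index $i$; this matches what the paper does later in the proof of part~(a).
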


\begin{proof}
Since $\tau_{-y,l}$ is non-negative, the skewness parameter is 1. With some modifications of the Fourier methods in \cite[Ch.XVIII.3 of]{feller}, we define
\begin{equation*}
\begin{split}
\rho(s,\zeta) &= \mathbb{E} \left[s^N e^{i \zeta S_N}\right], \\
\rho'(s,\zeta) &= \mathbb{E} \left[s^N e^{i \zeta S_N} {\bf{1}}_{\{S_N=l\}}\right],
\end{split}
\end{equation*}
where $N$ is the first time the random walk crosses 0 and reaches some
positive value. Then we have the relationship between $\rho$ and $\rho'$:
\begin{equation}
\label{eq-coefficients}
\rho(s,\zeta) = \mathbb{E}\left[s^N e^{i\zeta S_N}\right] 
                    =\sum \limits_{l=1}^{\infty} \E \left[s^N
                      {\bf{1}}_{\{S_N =1\}} e^{i\zeta S_N} \right]
 = \sum \limits_{l=1}^{\infty} \mathbb{E} \left[s^N {\bf{1}}_{\{S_N =1\}} e^{i\zeta l} \right].
\end{equation}
Let $\mathbb{E} \left[s^N {\bf{1}}_{\{S_N = l\}} \right] =f(s,l)$. Then $\rho(s,\zeta) = \sum \limits_{l=1}^{\infty} f(s,l) e^{i\zeta l}$.  Now we can apply the Laplace transform on $\rho$: 
\begin{equation}
f(s,l) = \displaystyle \frac{l}{2 \pi} \int_{0}^{2 \pi/l} \rho(s,\zeta) e^{-i\zeta l} \,d\zeta.
\end{equation}
We know from \cite{feller} that
\begin{equation}
\displaystyle \log \frac{1}{1-\rho(s,\zeta)} = \sum
\limits_{n=1}^{\infty} \frac{s^n}{n} \int_{y}^{\infty} e^{i\zeta z}
F^{n \ast} \,dz, 
\end{equation}
which gives us
\begin{equation}
\rho(s,\zeta) = 1- \exp \left \{-\sum \limits_{n=1}^{\infty} \frac{s^n}{n} \int_{y}^{\infty}e^{i\zeta z} F^{n \ast} \,dz \right \}.
\end{equation}
Note that $f(s,l) = \mathbb{E} \left[s^N {\bf{1}}_{\{S_N = l \}}
\right] = \sum \limits_{n=1}^{\infty} s^n P_{-y} \{N=n, S_N = l \}$, which is exactly the generating function of the probability we want to
calculate. Therefore, we have
\begin{equation}
\begin{split}
f(s,l) &= \displaystyle \frac{l}{2\pi} \int_{0}^{2\pi/l} \rho(s,\zeta) e^{-i\zeta y} \,dy
        = \displaystyle \frac{l}{2 \pi} \int_{0}^{2 \pi/l} -
        \exp \left \{-i\zeta y - \sum \limits_{n=1}^{\infty} \frac{s^n}{n}
          \int_{y}^{\infty} e^{i\zeta z} F^{n \ast} \,dz \right \} \,d\zeta. 
\end{split}
\end{equation}
We want to know the behavior of $\lim \limits_{n \rightarrow \infty} P_{-y} \{N=n,S_N=l\}$. In other words, we are interested
in the coefficients of $s^n$ as $n \rightarrow \infty$  in \eqref{eq-coefficients}. Taylor expansion tells us the coefficient of
$s^n$ is $\left(\displaystyle \frac {d^n f(s,l)}{ds^n}\Big
  |_{s=0}\right)\Big/n!$. Since what's inside the integral is always
finite, we can exchange integration and differentiation. After
calculation we get 
\begin{equation}
\begin{split}
P_{-y} \{N=n, S_N=l\} &= \displaystyle \frac{l}{2\pi n!}
\int_{0}^{2\pi/l}-\exp^{-i\zeta l} \left(-(n-1)!) \int_{y}^{\infty}
  e^{i\zeta z} F^{n \ast} \,dz \right) \,d\zeta\\ 
& \xrightarrow {n \rightarrow \infty} \displaystyle \frac{l}{2\pi n}
\int_{0}^{2\pi/l} \exp^{-i\zeta l} \int_{y}^{\infty} e^{i\zeta z}
\frac{1}{\sigma \sqrt{2\pi n}} e^{-\frac{z^2}{2n{\sigma}^2}}
\,d\zeta.\\ 
& \rightarrow \displaystyle \frac{l}{2\pi n} \int_{0}^{2\pi/l}
e^{-i\zeta l} \left(\int_{y}^{\infty} \frac{1}{\sigma \sqrt{2\pi n}}
  e^{-\frac{z^2}{2n{\sigma}^2} + i\zeta z}\,dz \right) \,d\zeta. 
\end{split}
\end{equation}
The second line results from the central limit theorem. The integrand insided the parenthesis in the above equation is equal to 
\begin{equation*}
\displaystyle \frac{1}{\sigma \sqrt{2\pi n}} \exp \left \{-\frac{(y-i\zeta n
    {\sigma}^2)}{2n{\sigma}^2} - \frac{n(\sigma \zeta)^2}{2} \right \}
\rightarrow C_1(y) \cdot \exp \left \{-\frac{n(\zeta \sigma)^2}{2} \right \}, 
\end{equation*}
for some constant $C_1(y)$ that depends on $y$ (since $\displaystyle \frac{1}{\sigma \sqrt{2\pi
    n}} \exp \left \{-\frac{(y-i\zeta n {\sigma}^2)}{2n{\sigma}^2} \right \}$ is analytic
in $n$ and bounded). Define $f \sim g$ if $g/c \leq f \leq cg$ for
some uniform constant $c$. Then, we have 
\begin{equation}
\begin{split}
P_{-y} \{N=n, S_N =l \} &\sim \displaystyle \frac{C_1(y)}{2 \pi n}
\int_{0}^{2 \pi/l} \exp \left \{-i \zeta l - \frac{n(\zeta \sigma)^2}{2} \right \}
\,d\zeta \\
& \sim \displaystyle \frac{C_1(y)}{2 \pi n} \int_{0}^{2 \pi/l}
 \exp \left \{-\frac{(\zeta \sigma \sqrt{n})^2}{2} - \frac{l^2}{2} \right \} \,d\zeta \\ 
& \sim \exp \{-l^2/2 \} \displaystyle \frac{l \cdot C_1(y)}{2 \pi n} \int_{0}^{2
  \pi/l} \exp \left \{-\frac{(\zeta \sigma \sqrt{n})^2}{2} \right \} \,d\zeta \\
  & \sim C_2(y,l) \frac{l}{2 \pi n}\int_{0}^{2
  \pi/l} \exp \left \{-\frac{(\zeta \sigma \sqrt{n})^2}{2} \right \} \,d\zeta,
\end{split}
\end{equation}
where $C_2(y,l)$ is a function of $y,l$. If we do a change of variable, that is, let $w=\sigma \zeta \sqrt{n}$, we get
\begin{equation}
\begin{split}
P_{-x} \{N=n, S_N=l\} &\sim C_2(y,l) \int_{0}^{2 \pi \sigma \sqrt{n} /l} \exp \{-(w+il)^2/2 \} \cdot \frac{1}{\sigma \sqrt{n}} \,dw \\ 
&\sim C_2(y,l) n^{-3/2} \int_{0}^{\infty} \frac{1}{2\pi} \exp \{-(w+il)^2/2 \} \,dw \\
& \sim C_3(y,l) n^{-3/2},
\end{split}
\end{equation}
for some finite number $C_3(y,l)$.The last step is true because that inside the integral is almost a
density function of a normal distribution except that the domain is complex so it converges to a constant. Hence, $\tau_{-x,l} \in \mathcal{Z}_{[1,c(y,l)]}$. For each inter-half-excursion time $\tau_i$, there are finitely many possible starting position $-y'$ and ending position $l'$. Therefore, $\tau_i \in \mathcal{Z}_{[1,c(S)]}$.
\end{proof}

\par Now we are ready to prove part(a) of our theorem.
\begin{proof}[proof of part (a)]
For part (a) of the theorem, we only need to check that $A_{k,x}$ holds at the end of each excursion. Hence,
\begin{equation*}
\begin{split}
A_{k,x} &= \{\sum \limits_{i=1}^m (\tau_{2i-1}-\tau_{2i} \geq x \sum \limits_{i=1}^m (\tau_{2i-1}+\tau_{2i}), \forall 1 \leq m \leq k\} \\
&= \{(1-x) \sum \limits_{i=1}^m \tau_{2k-1} \geq (1+x) \sum \limits_{i=1}^m \tau_{2k}, \forall 1 \leq m \leq k\} \\
&= \{\xi_1 \geq 0,...,\xi_m \geq 0, \forall 1 \leq m \leq k\},
\end{split}
\end{equation*}
where $\xi_i=(1-x) \tau_{2i-1}-(1+x) \tau_{2i}$. Since $\{\xi_i\}_{i=1}^\infty$ are persistently power-skewed,
\begin{equation*}
\mathbb{P}(A_{k,x}) = n^{-q+o(1)},
\end{equation*}
where $q=\lim \limits_{n \rightarrow \infty} \mathbb{P}(\sum \limits_{i=1}^n \xi_i<0)$.

\par Consider the sequence $\{S_{t_{2i-2}+1}, S_{t_{2i-2}}, S_{t_{2i-1}+1}, S_{t_{2i}+1}\}$. Intuitively, these are the starting point , the ending point of the first half-excursion, and those of the second half-excursion, respectively, in the $i$th complete excursion. This sequence forms a Markov chain. Since we only have finitely many choices of positive and negative increments, it is a finite regular Markov chain with a stationary distribution. From Lemma 2.1 we know $\tau_i \in \mathcal{Z}_{[\kappa,c]}$. Hence $\sum \limits_{i=1}^n \xi_i$ as $n \rightarrow \infty$ becomes sum of finitely many types of random variables, denoted as $Z_{(u_T,u'_T)},...,Z_{(u_T,u'_T)}, Z_{(v_1,v'_1)},...,Z_{(v_T,v'_T)}$, where $Z_{(u_i,u'_i)}$ is the type of a $\tau_{2k-1}$, and $Z_{(v_i,v'_i)}$ the type for a $\tau_{2k}$. Note that $Z_{(u_i,u'_i)} \in \mathcal{Z}_{[1,c(u_i,u'_i)]},  Z_{(v_i,v'_i)} \in  \mathcal{Z}_{[1,c(v_i,v'_i)]}$. Let $\gamma_{u_1},..,\gamma_{u_T},\gamma_{v_1},...,\gamma_{v_T}$ be the corresponding to the stationary probability of seeing each type of $\tau_{2i-1}$ and $\tau_{2i}$ along the time line.

\par For a random variable $Z$ such that $Z ]in \mathcal{Z}_{[\kappa,c]}$, $Z$ has characteristic function $\zeta(t;\kappa,c)=\exp(-c|t|^{1/2}\{1-i \kappa \text{sgn}(t)\})$ \cite{Durrett}. The sum of Markov renewal stable random variable is still a stable random variable with the same stability parameter. Thus, 
\begin{equation}
\label{generalcase}
\begin{split}
\lim \limits_{n \rightarrow \infty} \mathbb{P}(\sum \limits_{i=1}^n \xi_i<0) &= \lim \limits_{n \rightarrow \infty} \mathbb{P}(\sum \limits_{i=1}^n (1-x)\tau_{2i-1}<\sum \limits_{i=1}^n (1+x)\tau_{2i}) \\
&= \lim \limits_{T \rightarrow \infty} \mathbb{P}((1-x) \sum \limits_{i=1}^T (\gamma_{u_i} c(u_i,u'_i)) Z_1< (1+x) \sum \limits_{i=1}^T (\gamma_{v_i}  c(v_i,v'_i)) Z_2),
\end{split}
\end{equation}
where $Z_1,Z_2 \in \mathcal{Z}-{[1,1]}$ are independent. We can calculate the scaling parameter of the sum from the characteristic function. Specifically, $aZ_1 + bZ_2 = (\sqrt{a}+\sqrt{b})^2 Z, Z \in \mathcal{Z}_{[1,1]}$ for $a,b>0$. Hence,
\begin{equation}
\label{general2}
\begin{split}
\lim \limits_{n \rightarrow \infty} \mathbb{P}(\sum \limits_{i=1}^n \xi_i<0) &= \lim \limits_{T \rightarrow \infty} \mathbb{P}((1-x) (\sum \limits_{i=1}^T \sqrt{\gamma_{u_i} c(u_i,u'_i)})^2 Z'_1< (1+x) (\sum \limits_{i=1}^T \sqrt{\gamma_{v_i} c(v_i,v'_i)})^2  Z'_2 \\
&= \lim \limits_{T \rightarrow \infty} \mathbb{P}((1-x) (b^+(S))^2 Z_1< (1+x) \sum \limits_{i=1}^T (b^-(S))^2 Z_2,
\end{split}
\end{equation}
where $b^+(S) = \sum \limits_{i=1}^T \sqrt{\gamma_{u_i} c(u_i,u'_i)}, b^-(S) = \sum \limits_{i=1}^T \sqrt{\gamma_{v_i} c(v_i,v'_i)}$. We also know from the characteristic function that $aZ_1-bZ'_1=Z$, for $Z \in \mathcal{Z}_{[\frac{\sqrt{a}-\sqrt{b}}{\sqrt{a}+\sqrt{b}},(\sqrt{a}+\sqrt{b})^2]}$. Continuing \eqref{general2}, we have $\lim \limits_{n \rightarrow \infty} \mathbb{P}(\sum \limits_{i=1}^n \xi_i<0) = \mathbb{P}(Z<0)$, for $Z \in \mathcal{Z}_{[\kappa(x,b),c]}$, where
\begin{equation}
\kappa(x,b) = \frac{\sqrt{1-x}b(S)-\sqrt{1+x}}{\sqrt{1-x}b(S)+\sqrt{1+x}} = \frac{\sqrt{1-x}b^+(S)-\sqrt{1+x}b^-(S)}{\sqrt{1-x}b^+(S)+\sqrt{1+x}b^-(S)},
\end{equation}
where $b(S)=b^+(S)/b^-(S)$. Hence $q=\mathbb{P}(Z_{\kappa,1}<0) = \phi(x,b(S))$. 

\par \cite{chamberss} gives a way to simulate a randome variable $Z \in \mathcal{Z}_{[\kappa,c]}$, which is
\begin{equation}
\label{generate-levy}
Z \stackrel{d}{=} \frac{\sin(1/2(\Phi-\Phi_0)}{\cos^2(\Phi)} \cdot \frac{\cos(\Phi-1/2(\Phi-\Phi_0))}{W},
\end{equation}
where $W$ follows standard exponential distribution and $\Phi$ is uniform on $(-\pi/2,\pi/2)$; also $\Phi_0 = -2\arctan(\kappa)$. Note that $Z<0$ is equivalent to $\sin(1/2(\Phi-\Phi_0))<0$. Since $\Phi \in [-\pi/2,\pi/2]$ and $\kappa \in [-1,1], Z<0$ is equivalent to $\Phi<\Phi_0$. Hence, we have
\begin{equation*}
\mathbb{P}(Z<0) = \mathbb{P}(\Phi<\Phi_0) = \frac{\pi/2-2\arctan(\kappa)}{\pi} = \frac{1}{2} - \frac{2\arctan(\kappa)}{\pi}. 
\end{equation*}
In our case $\kappa(x,b) = \displaystyle \frac{\sqrt{1-x}b-\sqrt{1+x}}{\sqrt{1-x}b+\sqrt{1+x}}$. Let $\kappa(x,b) = \tan(-\theta/2)$. Then $\phi(x,b) = \displaystyle \frac{1}{\pi}(\pi/2+\theta)$. Let $m^2 = \displaystyle \frac{1+x}{1-x}$. We have
\begin{equation*}
\tan^2(\theta/2) = \kappa^2(x,b) = \frac{(b-m)^2}{(b+m)^2} = \frac{1-\cos \theta}{1+\cos \theta}.
\end{equation*}
Hence, $\cos \theta = \displaystyle \frac{2bm}{b^2+m^2}$, which gives us
\begin{equation*}
\begin{split}
\cos(\pi/2+\theta) &= -\sin \theta = \frac{m^2-b^2}{m^2+b^2} = \frac{(1+x)-b^2(1-x)}{(1+x)+b^2(1-x)}\\
&= \frac{(b^2-1)-x(b^2+1)}{(b^2+1)-x(b^2-1)} = \frac{\bar{\psi}-x}{1-\bar{\psi}x},
\end{split}
\end{equation*}
where $\bar{\psi} = \displaystyle \frac{b^2-1}{b^2+1}$. Hence, $\phi(x,b) = \displaystyle \frac{1}{\pi} \arccos \bigg(\frac{\bar{\psi}-x}{1-\bar{\psi}x} \bigg)$, which finishes our proof.
\end{proof}

\vspace{20pt}
\section{Proof of Theorem 1.1(b)}
Intuitively, as $t \ra \infty$, the number of excursions completed
before time $t$ will have order $t^{1/2}$. Hence, we expect the
persistent exponent of $t$ should be $\phi(S,x)/2$. 
\begin{proof}[proof of part (b)]
 For simplicity, for the proof below $t$ takes on positive real values. The generalization from positive integer values to positive
 real values is quite straightforward.  We want to show
\begin{equation}\label{eq-partb} 
C_1 t^{-\phi(x,b)/2} \leq \P(\tilde{A}_{t,x}) \leq C_2
  t^{-\phi(x,b)/2},
\end{equation}
as $t \ra \infty$ for some constants $C_1,C_2$.

\par Let $N_t = \max \{k:\sum \limits_{i=1}^k \tau_{2k} \leq t\}$. That is, $N_t$ is the maximum number of complete excursions the random walk finishes before time $t$. First, we proved the
lower bound power. Since $A_t, A_k$ are monotonic decreasing in $t,k$,
we have
\begin{equation}
\begin{split}
  \label{eq-lower-bound}
  \P(\tilde{A}_{t,x}) & \geq \P(A_{N_t+1,x}, N_t \leq \floor{t^{1/2+\delta}}-1 )
  \geq \P(A_{\floor{t^{1/2+\delta}}}) - \P(N_t > t^{1/2+\delta}),
\end{split}
\end{equation}
for some small $\delta>0$. For the first term, as $t \ra \infty$,
$\P(A_{\floor{t^{1/2+\delta}}}) \sim  t^{(\floor{t^{1/2+\delta}})
  \phi(S,x)} \ra t^{(t^{1/2+\delta})\phi(S,x)}$. Let $\delta \ra
0$. Then the exponent goes to $-\phi(S,x)/2$. For the second term,
adopting the same notation as in the proof of part (a), we have
\begin{equation}
  \label{eq-lower-bound2}
  \begin{split}
    \P(N_t > t^{1/2+\delta}) & = \P \left (\sum
    \limits_{i=1}^{t^{1/2+\delta}} \tau_i < t \right ) \\
&\ra \P \bigg(t^{1/2+\delta}  (b^+(S))^2 Z_1 + (b^-(S))^2 Z_2) < t \bigg),
  \end{split}
\end{equation}
where $Z_1,Z_2 \in \mathcal{Z}_{[1,1]}$ are independent, and $b^+(S), b^-(S)$ are the same as in the proof of part (a) of Theorem 1.4. Since we have finitely many possible states, the above equiation simplifies to $\P(N_t> t^{1/2+\delta}) \ra
\P(\tau_1 < C_3 t^{-2\delta})$ for some constant $C_3$ and $\tau_1 \in Z_{[1,1]}$ \cite{gnedenko}. From the probability density function of standard L\'evy(1/2) varaible, $f(\tau_t=t) = \displaystyle \sqrt{\frac{1}{2\pi}} \frac{e^{-1/{(2x)}}}{x^{3/2}}$, we see as $t \ra \infty, t^{-2\delta}
  \ra 0$. The CDF of $\tau_1$ decays exponentially, faster than any
  power. Thus, $\P(N_t > t^{1/2+\delta}) < t^{-m}$ for any positive $m$. Now taking $\delta \ra 0$, by the lower bound of the first and the second term in Equation ~\ref{eq-lower-bound}, we get our
  desired lower bound of $C_1 t^{-\phi(x,b)/2}$.

\vspace{10pt}
\par Now we try to find the appropriate upper bound of $\P
(\tilde{A}_{t,x})$. Note we have
\begin{equation}
  \label{eq-upper-bound1}
  \P(\tilde{A}_{t,x}) = \P(\tilde{A}_{t,x}, N_t \leq t^{1/2-\delta}) +
  \P(\tilde{A}_{t,x}, t^{1/2-\delta} < N_t \leq t^{1/2+\delta}) +
  \P(\tilde{A}_{t,x}, N_t \geq t^{1/2-\delta}),      
\end{equation}
and the last term is bounded by $t^{-m}$ for arbitray $m < \infty$. We
consider the first two terms. For the middle term, we have
\begin{equation}
  \label{eq-upper-bound2}
\begin{split}
  \P(\tilde{A}_{t,x}, t^{1/2-\delta} < N_t \leq t^{1/2+\delta}) & \leq
  \P (A_{N_t+1,x}, t^{1/2+\delta} \geq N_t \geq t^{1/2-\delta}) \\
&\leq \P(A_{\floor{t^{1/2-\delta}}}) \sim t^{(1/2-\delta)\phi(x,b)}. 
\end{split}
\end{equation}
Thus, as $\delta \ra \infty$, the second term is upper bounded by a constant times $t^{-\phi(x,b)/2}$. Now we are only left with the first
term of ~\eqref{eq-upper-bound1}, namely, $\P(\tilde{A}_{t,x}, N_t \leq
t^{1/2-\delta})$. We approach it using the following method. Note that
for any $1 \leq k \leq t^{1/2-\delta}$,we have 
\begin{equation}
\label{eq-upper-bound3}
  \begin{split}
    \P (A_{k,x}, N_t = k) & \leq \P (A_{k,x}, N_t = k, \max \limits_{1
      \leq i \leq 
      k} \tau_i \geq \hat{\delta} t) + \P (A_{k,x}, N_t = k, \max
    \limits_{1 \leq i \leq 
      k} \tau_i < \hat{\delta} t),  
  \end{split}
\end{equation}
for some $\hat{\delta} >0$. Fix $\hat{\delta}$, and let $L = \floor{1/{\hat{\delta}}}$, and $h \equiv  
\hat{\delta} t$. Then the second
part is at most
\begin{equation*}
  \P (S_{2k} > Lh, \tau^*_{2k} < h),
\end{equation*}
where $S_{2k} = \sum \limits_{i=1}^{2k} \tau_i$, $\tau_i$ not
necessarily  i.i.d, but belongs to $\mathcal{Z}_{[\kappa,c]}$, and $\tau^*_{2k} = \max \limits_{1 \leq i \leq 2k}
\{\tau_i\}$. Now we 
try to find an upper bound for the above probability
\cite{yuval}. $t_0 = t \approx Lh$. Let $t_1 = t - 
2h$, and $\tau_{t_1} 
= \inf \{i: S_i \geq t_1 \}$. So we have
\begin{equation*}
  \begin{split}
    \P (S_{2k} > Lh, \tau^*_{2k} < h) &= \P (S_{2k} \geq t_1 + 2h, \tau_{t_1}
    \leq 2k, S_{\tau_{t_1}} \leq t_1 + h, \tau^*_{2k} \leq h ) \\
& \leq \P(S_{2k} > Lh|A_{\tau_{t_1}}) P(A_{\tau_{t_1}}),
  \end{split}
\end{equation*}
where $A_{\tau_{t_1}} = \{ \tau_{t_1} \leq 2k, S_{\tau_{t_1}} \leq t_1
+ h, \tau^*_{\tau_{t_1}} \leq h \}$.  By the strong Markov property of
$S_k$ at $\tau_{t_1}$,
we have
\begin{equation*}
  \begin{split}
    \P(S_{2k} > Lh|A_{\tau_{t_1}}) & = \E(
\P(S_{2k} - S_{\tau_{t_1}} \geq Lh - S_{\tau_{t_1}} |S_{\tau{t_1}})|A_{\tau_{t_1}}))
 \leq \E (\P(\tilde{S}_{2k - \tau_{t_1}} \geq h|A_{\tau_{t_1}})) \\
&= \E( \P (\tilde{\tau}_h \leq 2k - \tau_{t_1}|A_{\tau_{t_1}})) \leq \P
(\tilde{\tau}_h \leq 2k),
  \end{split}
\end{equation*}
where $\tilde{S}$ and $\tilde{\tau}$ denote a new random walk process
starting at $\tau_{t_1}$. By the strong Markov property,$\tilde{S},
$ is a mean 0 random walk with the same types of increments which are
all in $\mathcal{Z}_{[\kappa,c]}$. Therefore, we have
\begin{equation*}
  \begin{split}
     \P (S_{2k} > Lh, \tau^*_{2k} < h) & \leq \P (\tau_h \leq 2k)
     \P(A_{\tau_{t_1}}) \leq \P(\tau_h \leq 2k) \P(S_{2k} \geq t_1,
     \tau^*_{\tau_{t_1}} \leq h)
  \end{split}
\end{equation*}
If we iterate through $t,\tau_{t_1},\tau_{t_2}, \tau_{t_3}...$, where $\tau_{t_k}
= \tau_{t_{k-1}} - 2h$, note that by coupling lemma, $\P(\tau_{t_i}
\leq 2k) \leq \P (\tau' \leq 2k)$, where $\tau'= \inf \{i: S'_i \geq
t_i\},$ where $S'$ is a random walk with each step increment $\tau'$ a
L\'evy random variable with the biggest scale parameter the original
random walk increment can achieve. We get
\begin{equation*}
  \P(S_{2k} > Lh, \tau^*_{2k} < h) \leq \P^{L/{2}}(\tau'_h \leq 2k).
\end{equation*}
Since $\tau'_i$, by our definition, is iid L\'evy(1/2), and $h =
\delta t$, and $k < t^{1/2 - \delta}$, we have $\P(\tau'_h \leq 2k) =
\P(\tau > \frac{h}{4k^2}) \sim 
  \frac{2k}{\sqrt{h}} < C_0 t^{-\delta},$
which implies
\begin{equation}
\label{eq-upper-bound4}
  \P(S_{2k} > Lh, \tau^*_{2k} < h) \leq Ct^{-L\delta/2},
\end{equation}
for some fixed constant $C>0$. Since we can choose $\hat{\delta}$ to
be as small as we want, $L\delta/2$ can be as big as we want. Thus,
the second term of Equation \eqref{eq-upper-bound3} has an upper bound
with arbitrarily small power. 

\par Now we can find an appropriate upper bound for the first term of
 \eqref{eq-upper-bound3}. Note that in the event $A_k$, we have
the relationship $(1-x) \tau_{2i-1} - x \tau_{2i} \geq 0, \forall 1
\leq i \leq k$. Define $\gamma = \inf \{m: \tau_m \geq \hat{\delta}
t\}$. Clearly, $A_{N_t} \subset A_{\gamma/2-1}$. Thus, we have
\begin{equation}
  \label{eq-upper-bound5}
  \begin{split}
   & \sum \limits_{k=0}^{t^{1/2-\delta}}\P(\tilde{A}_{t,x}, N_t=k, \max
    \limits_{1 \leq m \leq 2k} \{\tau_m\} \geq \hat{\delta} t)  \leq
    \P(A_{N_t,x}, N_t \leq t^{1/2-\delta}, \max 
    \limits_{1 \leq m \leq 2k} \{\tau_m\} \geq \hat{\delta} t)\\
& \leq \P(A_{\floor{\gamma/2-1},x}, 1 \leq \gamma \leq
2t^{1/2-\delta}) \leq \sum \limits_{\gamma=1}^{2t^{1/2-\delta}}
\P(A_{\max(\gamma/2-1,0)}, \tau_\gamma \geq \hat{\delta} t) \\
& \leq \sum \limits_{\gamma=1}^{2t^{1/2-\delta}}
\P(A_{\max(\floor{\gamma/2-1},0)} \P(\tau_\gamma \geq \hat{\delta})
\leq C_4 \sum \limits_{\gamma=0}^{t^{1/2-\delta}} \gamma^{-\phi(S,x)}
\frac{1}{\sqrt{\delta}} t^{-1/2} \\
& \leq \frac{C_5}{\sqrt{\delta}} t^{-\phi(S,x)/2 - \delta(1-\phi(S,x))}.
  \end{split}
\end{equation}
Thus, combing ~\eqref{eq-upper-bound3}, ~\eqref{eq-upper-bound4},
~\eqref{eq-upper-bound5}, we get that for $k \leq t^{1/2-\delta}$, and
some small fixed $\delta>0$, we can always find big enough $L$ such that
\begin{equation}
  \label{eq-upper-bound6}
  \P(\tilde{A}_{t,x}, N_t \leq t^{1/2-\delta}) \leq Ct^{-L\delta/2} +
  \frac{C_5}{\sqrt{\delta}} t^{-\phi(S,x)/2 - \delta(1-\phi(S,x))}
  \leq C'''t^{\phi(S,x)/2}, 
\end{equation}
for some constant $C'''$. Thererfore, combining the arguments for the
cases of $k \leq t^{1/2-\delta}, t^{1/2-\delta} \leq k
<t^{1/2+\delta}, k \geq t^{1/2+\delta}$ together, we get our desired
upper bound in ~\eqref{eq-partb}. Together with the previous
arguments on lower bound, we finished the proof.
\end{proof}

\medskip

\vspace{20pt}

\end{document}